\documentclass[14pt]{amsart}
\usepackage{amsmath, amsthm}

\textwidth 14cm\textheight 20cm


\newtheoremstyle{theorem}
  {10pt}		  
  {10pt}  
  {\sl}  
  {}     
  {\bf}  
  {. }    
  { }    
  {}     
\theoremstyle{theorem}
\newtheorem{theorem}{Theorem}
\newtheorem{corollary}[theorem]{Corollary}
\newtheorem{proposition}[theorem]{Proposition}
\newtheoremstyle{defi}
  {10pt}		  
  {10pt}  
  {\rm}  
  {}     
  {\bf}  
  {. }    
  { }    
  {}     
\theoremstyle{defi}
\newtheorem{definition}[theorem]{Definition}
\newtheorem{remark}[theorem]{Remark}



\begin{document}

\title{Exponential stability of C0-Semigroup via Lyapunov inequality in Banach space}

\author{Belabbas Madani$^1$ Zohra Bendaoud$^2$
}
\maketitle
$^1$University of Oran 1 Ahmed Ben Bella, Algeria.
$^1$$^,$$^2$ENS of Laghouat, Laghouat, Algeria\\
$^1$b.madani@ens-lagh.dz  \
$^2$z.bendaoud@ens-lagh.dz\\
 
\begin{abstract}
We give a relation between the exponential stability of $ C_{0}- $semigroup $ \textbf{T}=\left\lbrace T(t) \right\rbrace_{t\geq 0}  $ and the solutions of Lyapunov inequality
\( \left\langle QAx,x\right\langle +\left\langle Qx,Ax\right\langle \leq -||x||^{2},   \)
in $ B^{+}(X,X^{*}) $, with $ X $ is a Banach space. The solutions of this inequality characterizes, the boundedness of the resolvent $ R(\lambda,A) $ inside and outside of the left half-plane $ \Re\lambda\geq 0 $, and also the left invertibility of the $ C_{0}- $semigroup T.

{\bf Key Words:}  semigroups, lyapunov, left-invertible, banach space, exponential stability.
\end{abstract}

\section{Introduction}

A classical result of R. Datko \cite{Datko} stats that in Hilbert space H the $ C_{0}- $semigroup $T=\left\lbrace T(t) \right\rbrace_{t\geq 0}  $ generated by A is uniformly exponentially stable if and only if there exists a self-adjoint positive operator $ Q\in B(H) $ such that
\[ \left\langle QAx,x\right\rangle + \left\langle Qx,Ax\right\rangle =-||x||^{2}, \quad \forall x\in D(A).  \] 
The authors in \cite{Phat} and \cite{Preda} studied the equivalence between the solvability of Lyapunov operator equation (respectively inequality) and the exponential stability in Banach space. However, the strongly positive taken in the two proofs may not satisfy in arbitrary Banach space. So we give a necessary and sufficient condition for a Banach space isomorphic to a Hilbert space and also a sufficient inequality for Banach space which helps us to establish some new results.

\section{Preliminaries}

We denote by $ X $ a Banach space and by $ X^{*} $ its conjugate dual space i.e. the set of all bounded and antilinear functionals on $ X $, the duality pairing between $ X^{*} $ and $ X $ denote by $ \left\langle .,. \right\rangle  $ and by $ B(X,X^{*}) $ the space of all linear and bounded operators from $ X $ to $ X^{*} $. An operator $ Q\in B(X,X^{*})  $ will be called symmetric if $ \left\langle Qx,y \right\rangle = \left\langle Qy,x \right\rangle $ for all $ x,y\in X $, and is called positive if $\left\langle Qx,x \right\rangle \geq 0$ , for all $ x\in X $. If $ Q $ is positive operator with $ \left\langle Qx,x\right\rangle \geq \theta \left\| x\right\| ^{2} $, for some $ \theta >0 $ and all $ x\in X $, we called it strongly positive. Denote by $ B^{+}(X,X^{*}) $ the space of all positive operator on $ B(X,X^{*}) $.
$ \textbf{T}=\left\lbrace T(t) \right\rbrace_{t\geq 0} $ always $ C_{0}- $semigroup on $ X $ with generators $ A $. The $ C_{0}- $semigroup $ \textbf{T} $ is called uniformly exponentially stable if there exist constants $ \varepsilon >0, M\geq 1 $ such that \[ \left\| T(t) \right\|\leq Me^{-\varepsilon t},\: t\geq 0.  \]
It is well-known from \cite{Lin} (see also\cite{Dariva}) that 
\begin{theorem}\label{Lin}
	A Banach space $ X $ has the property that there exists a strongly positive operator $ P $ from $ X $ onto $ X^{*} $ such that $ \left\langle Px,x \right\rangle \geq \theta \left\|x \right\|^{2}   $ for some $ \theta>0 $, if and only if, the Banach space $ X $ is isomorphic to a Hilbert space.
	
\end{theorem}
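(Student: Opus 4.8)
The plan is to prove the two implications separately, in each case manufacturing an inner product on $X$ that is equivalent to the given norm. For the direction in which $X$ is assumed isomorphic to a Hilbert space $H$, fix an isomorphism $U:X\to H$ (bounded with bounded inverse). Let $R_{H}:H\to H^{*}$ be the canonical isometric Riesz isomorphism onto the antilinear dual, characterised by $\left(R_{H}h\right)(k)=\left\langle h,k\right\rangle_{H}$, and let $U^{*}:H^{*}\to X^{*}$ be the dual map, which is again an isomorphism. I would then set $P:=U^{*}R_{H}U\in B(X,X^{*})$, which is onto as a composition of surjections. A direct computation gives $\left\langle Px,x\right\rangle=\left\langle R_{H}Ux,Ux\right\rangle=\left\|Ux\right\|_{H}^{2}\geq \left\|U^{-1}\right\|^{-2}\left\|x\right\|^{2}$, so $P$ is the desired strongly positive, onto operator with $\theta=\left\|U^{-1}\right\|^{-2}$.

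For the converse, suppose $P\in B(X,X^{*})$ is strongly positive and onto, and define $B(x,y):=\left\langle Px,y\right\rangle$. By the linearity of $P$ together with the convention that elements of $X^{*}$ are antilinear functionals, $B$ is linear in $x$ and antilinear in $y$, i.e. a bounded sesquilinear form. The decisive step is to upgrade positivity to conjugate symmetry: since $B(x,x)=\left\langle Px,x\right\rangle\geq 0$ is real for every $x$, the complex polarization identity expresses $B(x,y)$ as a combination of the four diagonal quantities $B(x\pm y,\,x\pm y)$ and $B(x\pm iy,\,x\pm iy)$, and their reality forces $B(x,y)=\overline{B(y,x)}$. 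Consequently $[x,y]:=B(x,y)$ is a genuine inner product on $X$.

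It then remains to verify that $\left\|x\right\|_{P}:=\sqrt{[x,x]}$ is a Hilbert norm equivalent to the original one. The bounds $\theta\left\|x\right\|^{2}\leq\left\langle Px,x\right\rangle\leq\left\|P\right\|\left\|x\right\|^{2}$ yield $\sqrt{\theta}\,\left\|x\right\|\leq\left\|x\right\|_{P}\leq\sqrt{\left\|P\right\|}\,\left\|x\right\|$, so the identity map from $(X,\left\|\cdot\right\|)$ to $(X,\left\|\cdot\right\|_{P})$ is an isomorphism. Completeness of $(X,\left\|\cdot\right\|_{P})$ is inherited from the Banach space $X$ through this equivalence, so $(X,[\cdot,\cdot])$ is a Hilbert space isomorphic to $X$, which closes the argument.

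I expect the main obstacle to be establishing that $B$ is Hermitian: this is exactly the point where working over the complex field is essential, since positivity alone does not imply symmetry over $\mathbb{R}$, and one must check that the polarization identity is compatible with the linear/antilinear split dictated by the conjugate dual $X^{*}$. Once this is in place the remaining steps — the two-sided norm estimate, inheritance of completeness, and the transport of the Hilbert structure in the first implication — are routine; note in particular that the surjectivity of $P$ is used only to produce an onto operator in the forward direction and is not needed to recover the Hilbert structure in the converse.
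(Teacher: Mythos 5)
Your argument is correct. Note, however, that the paper does not prove this statement at all: it is quoted from the references [Lin] and [Drivaliaris--Yannakakis], so there is no in-paper proof to compare against; your write-up is essentially the standard argument from that literature. The forward direction (transporting the Riesz map $R_{H}$ through an isomorphism $U$ via $P=U^{*}R_{H}U$) is routine and complete, and in the converse the key step --- that a bounded sesquilinear form $B(x,y)=\left\langle Px,y\right\rangle$ with nonnegative real diagonal is automatically Hermitian by polarization --- is valid precisely because the paper's convention of an antilinear dual places everything over $\mathbb{C}$. Your caveat that this step is field-dependent is apt but slightly overstated: over $\mathbb{R}$ the theorem survives by replacing $P$ with its symmetrization $\tfrac{1}{2}(P+P^{*}|_{X})$, which has the same diagonal and hence the same strong positivity, after which the identical norm-equivalence and completeness argument applies. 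Finally, your closing observation that surjectivity of $P$ is used only to manufacture an onto operator in the forward direction, and is never needed to recover the Hilbert structure, independently corroborates the remark the authors make immediately after the theorem (that $P$ need not be assumed onto), which they otherwise only attribute to the proof in [Lin].
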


From the proof in \cite{Lin}, the theorem is still true if the operator $ P $ it should be not assumed onto.

\section{Exponential stability }
\begin{theorem}\label{1}
	Let $ X $ be a Banach space isomorphic to a Hilbert space and $ A $ the generator of $ C_{0}-$semigroup $ \lbrace T(t)\rbrace_{t\geq 0}$. Then the following are equivalent.
	\begin{enumerate}
		\item $ \lbrace T(t)\rbrace_{t\geq 0}$ is uniformly exponentially stable, with $ ||T(t) ||\leq Me^{\omega_{0}t} \qquad (M\geq 1,  \omega_{0}<0) $.
		\item There exists an operator $ Q\in B^{+}(X,X^{*}) $ such that
		\begin{equation}\label{ineq1}
			\left\langle QAx,x\right\rangle +\left\langle Qx,Ax\right\rangle \leq -\left\|x \right\|^{2}, \: \forall x\in D(A).
		\end{equation}
		
	\end{enumerate}
	
\end{theorem}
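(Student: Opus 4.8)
The plan is to prove the two implications separately. The isomorphism of $X$ with a Hilbert space, encoded through Theorem~\ref{Lin}, will be used only in the implication $(1)\Rightarrow(2)$, where it supplies the quadratic structure needed to build a positive operator; the reverse implication $(2)\Rightarrow(1)$ is a Datko-type argument that works in any Banach space and does not use the Hilbert structure at all.

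For $(1)\Rightarrow(2)$, I would first invoke Theorem~\ref{Lin} (in the form noted in the remark, where $P$ is not assumed onto) to obtain a strongly positive $P\in B(X,X^{*})$ with $\langle Px,x\rangle\ge\theta\|x\|^{2}$ for some $\theta>0$. I then define $Q\in B(X,X^{*})$ by $\langle Qx,y\rangle=\tfrac1\theta\int_{0}^{\infty}\langle PT(t)x,T(t)y\rangle\,dt$. Convergence and boundedness of $Q$ follow from the exponential bound $\|T(t)\|\le Me^{\omega_{0}t}$ with $\omega_{0}<0$, which makes the integrand decay like $e^{2\omega_{0}t}$. Positivity is immediate, since $\langle Qx,x\rangle=\tfrac1\theta\int_{0}^{\infty}\langle PT(t)x,T(t)x\rangle\,dt\ge0$. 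For the inequality, on $D(A)$ the map $t\mapsto T(t)x$ is differentiable with derivative $T(t)Ax$, so the product rule gives $\langle PT(t)Ax,T(t)x\rangle+\langle PT(t)x,T(t)Ax\rangle=\frac{d}{dt}\langle PT(t)x,T(t)x\rangle$; integrating over $[0,\infty)$ telescopes to $-\langle Px,x\rangle$, whence $\langle QAx,x\rangle+\langle Qx,Ax\rangle=-\tfrac1\theta\langle Px,x\rangle\le-\|x\|^{2}$. The factor $1/\theta$ is exactly what converts $\langle Px,x\rangle\ge\theta\|x\|^{2}$ into the required normalization.

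For $(2)\Rightarrow(1)$, I would apply \eqref{ineq1} to $T(t)x$ for $x\in D(A)$ and compute $\frac{d}{dt}\langle QT(t)x,T(t)x\rangle=\langle QAT(t)x,T(t)x\rangle+\langle QT(t)x,AT(t)x\rangle\le-\|T(t)x\|^{2}$. Integrating and using positivity of $Q$ yields $\int_{0}^{s}\|T(t)x\|^{2}\,dt\le\langle Qx,x\rangle\le\|Q\|\,\|x\|^{2}$, and letting $s\to\infty$ gives the uniform square-integrability bound $\int_{0}^{\infty}\|T(t)x\|^{2}\,dt\le\|Q\|\,\|x\|^{2}$, first for $x\in D(A)$ and then for all $x\in X$ by density and boundedness of each $T(t)$. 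From here the route to exponential stability is the classical Datko bootstrap: using the a priori estimate $\|T(t)\|\le M_{0}e^{\omega t}$ together with $T(t)x=T(t-s)T(s)x$, a lower estimate of the integral forces $\sup_{t\ge0}\|T(t)\|<\infty$; then the identity $t\|T(t)x\|^{2}=\int_{0}^{t}\|T(t-s)T(s)x\|^{2}\,ds$ combined with this uniform bound yields $\|T(t)\|^{2}\le C/t$, so $\|T(t_{0})\|<1$ for some $t_{0}$, and the semigroup property upgrades this to $\|T(t)\|\le Me^{\omega_{0}t}$ with $\omega_{0}<0$.

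The main obstacle is the final bootstrap in $(2)\Rightarrow(1)$: the square-integrability bound is not manifestly an exponential bound, and one must first secure uniform boundedness of $\{T(t)\}_{t\ge0}$ before the decay $\|T(t)\|^{2}\le C/t$ can be extracted. The two constructions surrounding it are routine once $Q$ is correctly normalized; the only delicate point in $(1)\Rightarrow(2)$ is that $\theta$ need not exceed $1$, so rescaling by $1/\theta$ (rather than by $\theta$) is essential to obtain the correct constant and sign in the Lyapunov inequality.
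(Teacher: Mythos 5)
Your proposal is correct and follows essentially the same route as the paper: Theorem~\ref{Lin} supplies the strongly positive $P$, the candidate $Q$ is the same integral $\tfrac1\theta\int_{0}^{\infty}\langle PT(t)x,T(t)y\rangle\,dt$, and the converse direction derives $\int_{0}^{\infty}\|T(t)x\|^{2}\,dt\le\|Q\|\,\|x\|^{2}$ exactly as in the text. The only difference is that you unpack the Datko--Pazy bootstrap by hand where the paper simply cites Theorem~3.1.8 of van Neerven; both are fine.
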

\begin{proof}
	$ (1).\Rightarrow (2). $ By theorem \ref{Lin} there exists an operator $ P\in B(X,X^{*}) $ such that $ \left\langle Px,x\right\rangle \geq \theta\left\|x \right\|^{2}$  for some $ \theta>0 $ and all $ x\in X $. We consider the linear operator $ Q_{0}B(X,X^{*}) $ defined by 
	\[ \left\langle Q_{0}x,y\right\rangle:=\dfrac{1}{\theta}\int_{0}^{+\infty}\left\langle PT(t)x,T(t)y \right\rangle dx, \: x,y\in X.   \] 
	Since $ \textbf{T} $ is uniformly exponentially stable, $ Q_{0} $ is well-defined. We show that $ Q_{0} $ is positive bounded operator, this follows from the following, for all $ x\in X $ we have
	\[ \left\|Q_{0}x \right\|=\sup_{||y||\leq 1 } |\left\langle Q_{0}x,y\right\rangle|\leq \int_{0}^{+\infty}\sup_{||y||\leq 1 } | \left\langle PT(t)x,T(t)y \right\rangle | dt \leq \dfrac{||P||M^{2}}{-2\omega_{0}}||x||^{2},  \]
	also
	\[ \left\langle Q_{0}x,x\right\rangle=\frac{1}{\theta}\int_{0}^{+\infty}\left\langle Q_{0}x,x\right\rangle \geq \int_{0}^{+\infty}||T(t)||^{2} dt \geq 0. \]
	
	Finally, since for all $ x,y \in D(A), $
	\[ \frac{d}{dt} \left\langle PT(t)x,T(t)x\right\rangle =\left\langle PT(t)Ax,T(t)x\right\rangle +\left\langle PT(t)x,T(t)Ax\right\rangle, \]  and 
	\[\int_{0}^{+\infty}\frac{d}{dt}\left\langle PT(t)x,T(t)y\right\rangle = -\left\langle Px,y\right\rangle, \]
    therefore
	\[ \left\langle Q_{0}Ax,x\right\rangle \left\langle Q_{0}x,Ax\right\rangle =-\frac{1}{\theta}\left\langle Px,x\right\rangle\leq -||x||^{2}, \: \forall x\in D(A). \]
		
	$ 2.\Rightarrow 1. $ Since \ref{ineq1} holds, by replacing $ x $ by $ T(t)x $ we have 
	\[ \left\langle QAT(t)x,T(t)x\right\rangle + \left\langle QT(t)x,AT(t)x\right\rangle  \leq -||T(t)x||^{2}, \]
	$\forall x\in D(A), \ t\geq 0,$ this means that $ \frac{d}{dt}\left\langle QT(t)x,T(t)x\right\rangle \leq -\left\|T(t)x \right\|^{2}.   $	
	
	By integration over $ (0,s), $ we obtain for all $ x\in D(A) $ and $ s>0 $
	\[ -\left\langle QT(s)x,T(s)x\right\rangle + \left\langle Qx,x\right\rangle \geq \int_{0}^{s}\left\|T(t)x \right\|^{2} dt.  \]
	Since $ Q $ is positive bounded operator and $ A $ is a densely defined operator on $ X $, then 
	\[ \int_{0}^{s}\left\|T(t)x \right\|^{2} dt \leq \left\langle Qx,x\right\rangle \leq ||Q||||x||^{2}, \forall x\in X, s>0,   \]		applying  Datko-Pazy theorem (see Theorem 3.1.8, \cite{Neerven2}) we obtain the exponential stability of $ \textbf{T}.  $
	
\end{proof}

\begin{remark}
	From the proof above, it should be noted that the result $ 2.\Rightarrow 1. $ is still true if X is arbitrary Banach space.
\end{remark}

Let $ L_{A}$ the set of all operators $ Q\in B^{+}(X,X^{*}) $ such that, the Lyapunov inequality $ \left\langle QAx,x\right\rangle +\left\langle Qx,Ax\right\rangle \leq -\left\|x \right\|^{2}  $ hold for all $ \forall x\in D(A) $. With this notation, $ L_{A}$ is non-empty implies the exponential stability of $ \textbf{T} $ in arbitrary Banach space.
It is clear that if $ Q\in  L_{A} $ then $ cQ\in  L_{A} $ for all $ c>0 $. We can show easily that $ L_{A}$ is closed subset of $ B^{+}(X,X^{*}) $, using that if $ \left\lbrace Q_{n}\right\rbrace_{n}\subset L_{A}  $ converging to $ Q $  then for $ x\in D(A) $
\[ \left\langle QAx,x\right\rangle+\left\langle Qx,Ax\right\rangle =\left\langle (Q-Q_{n})Ax,x\right\rangle +\left\langle (Q-Q_{n})x,Ax\right\rangle +\left\langle Q_{n}Ax,x\right\rangle +\left\langle Q_{n}x,Ax\right\rangle. \]

With \[ \left\| \left\langle (Q-Q_{n})Ax,x\right\rangle +\left\langle (Q-Q_{n})x,Ax\right\rangle \right\| \leq 2\left\|Q-Q_{n} \right\|\left\| Ax\right\|\left\| x\right\|     \]
and by letting $ n\rightarrow +\infty $  we obtain $ Q\in D(A) $.
\begin{corollary}
	Let $ A $ be the generator of $ C_{0}- $semigroup $ \textbf{T} $ on Banach space isomorphic to a Hilbert space. If $ \textbf{T} $ is uniformly exponentially stable and $ B\in B(X) $ such that $ \left\| B \right\|\leq \dfrac{1}{2\alpha || Q|| }$, for some $ Q\in L_{A} $ and $ \alpha>1 $. Then the $ C_{0}-$semigroup generated by $ A+B $ is uniformly exponentially stable with $ Q\in L_{A+B} $.
\end{corollary}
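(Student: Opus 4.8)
The plan is to verify that a suitable positive multiple of the given $Q$ satisfies the Lyapunov inequality for the perturbed generator $A+B$, so that $L_{A+B}$ is non-empty, and then to invoke the converse implication of Theorem \ref{1}---which, by the Remark, holds for an arbitrary Banach space---to deduce exponential stability. Since $B\in B(X)$ is bounded, the bounded perturbation theorem guarantees that $A+B$ generates a $C_0$-semigroup and that $D(A+B)=D(A)$; in particular the Lyapunov expression is well defined on the same domain.

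First I would expand, for every $x\in D(A)$,
\[
\langle Q(A+B)x,x\rangle+\langle Qx,(A+B)x\rangle
=\big[\langle QAx,x\rangle+\langle Qx,Ax\rangle\big]
+\big[\langle QBx,x\rangle+\langle Qx,Bx\rangle\big].
\]
The bracket involving $A$ is at most $-\|x\|^2$ because $Q\in L_A$. For the perturbation bracket I would use the duality estimate $|\langle f,x\rangle|\le\|f\|\,\|x\|$ in $X^*\times X$ together with the operator norms, obtaining
\[
|\langle QBx,x\rangle|\le\|QBx\|\,\|x\|\le\|Q\|\,\|B\|\,\|x\|^2,
\]
and similarly $|\langle Qx,Bx\rangle|\le\|Qx\|\,\|Bx\|\le\|Q\|\,\|B\|\,\|x\|^2$, so the perturbation contributes at most $2\|Q\|\,\|B\|\,\|x\|^2$. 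Inserting the hypothesis $\|B\|\le\frac{1}{2\alpha\|Q\|}$ bounds this by $\frac{1}{\alpha}\|x\|^2$, whence
\[
\langle Q(A+B)x,x\rangle+\langle Qx,(A+B)x\rangle
\le-\Big(1-\tfrac{1}{\alpha}\Big)\|x\|^2
=-\frac{\alpha-1}{\alpha}\,\|x\|^2.
\]

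Because $\alpha>1$, the factor $\frac{\alpha-1}{\alpha}$ is strictly positive, so on multiplying $Q$ by $c=\frac{\alpha}{\alpha-1}$ I would obtain an operator $cQ\in B^+(X,X^*)$ satisfying $\langle cQ(A+B)x,x\rangle+\langle cQx,(A+B)x\rangle\le-\|x\|^2$ for all $x\in D(A+B)$. Hence $cQ\in L_{A+B}$, the set $L_{A+B}$ is non-empty, and by the Remark the implication $(2)\Rightarrow(1)$ of Theorem \ref{1} applies in the present Banach space, forcing the $C_0$-semigroup generated by $A+B$ to be uniformly exponentially stable.

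The only delicate point is the normalization: the raw computation produces the constant $\frac{\alpha-1}{\alpha}<1$ rather than $1$, so strictly it is the rescaled operator $\frac{\alpha}{\alpha-1}Q$, and not $Q$ itself, that lies in $L_{A+B}$; the slack parameter $\alpha>1$ is present precisely to leave room for this renormalization. Apart from this bookkeeping, no genuine obstacle arises, since the remaining ingredients---bounded perturbation, the duality bound for the pairing, and the converse half of Theorem \ref{1}---are all at hand.
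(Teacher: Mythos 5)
Your proof follows essentially the same route as the paper: expand the Lyapunov form for $A+B$, bound the perturbation term by $2\left\|Q\right\|\left\|B\right\|\left\|x\right\|^{2}$, and use $\left\|B\right\|\leq \frac{1}{2\alpha\left\|Q\right\|}$ to obtain the bound $-\frac{\alpha-1}{\alpha}\left\|x\right\|^{2}$. Your additional observation about the normalization is a genuine improvement in precision: the paper stops at the constant $\frac{\alpha-1}{\alpha}<1$ and asserts $Q\in L_{A+B}$ without comment, whereas the computation literally only yields $\frac{\alpha}{\alpha-1}Q\in L_{A+B}$, which is exactly the rescaling you supply and which still suffices (via the remark after Theorem \ref{1}) for exponential stability of the perturbed semigroup.
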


\begin{proof}
If $ B\in B(X) $ such that $ \left\| B \right\|\leq \dfrac{1}{2\alpha || Q|| }$ for some $ Q\in L_{A} $ and $ \alpha >1 $, then for all $ x\in D(A) $ we have

\[  \begin{array}{ll}
	\left\langle Q(A+B)x,x\right\rangle +\left\langle Qx,(A+B)x\right\rangle & \leq -\left\| x\right\|^{2}+ \left\langle QBx,x\right\rangle+\left\langle Qx,Bx\right\rangle     \\ 
	 & \leq -\left\| x\right\|^{2}+2\left\| Q\right\|\left\| B\right\|\left\| x\right\|^{2} \\
	 &\leq -(\frac{\alpha -1}{\alpha})\left\| x\right\|^{2} .\end{array}, \] 
then we obtain the desired result.	
\end{proof}
In the following we establish a result of uniformly boundedness of the resolvent inside and outside the left half-plane $ Re \lambda\geq 0 $, for this we need some quantities; with  $\sigma(A)$ the spectrum set of $ A $, the spectral bound of A, is defined by $$ s(A)=\sup \left\lbrace Re \lambda\in \mathbb{R}: \lambda\in\sigma(A) \right\rbrace  $$,
the growth bound, is defined by 
$$ \omega_{0}=\inf\left\lbrace \omega\in \mathbb{R} : \exists M\geq 1 \ such \ that \left\| T(t)\right\| \leq Me^{\omega t} for \ all \ t\geq 0 \right\rbrace  $$.

If $ \textbf{T} $ is uniformly exponentially stable, then $ s_{0}(A)\leq s_{0}(A)\leq \omega_{0} $ (see (\cite{Engel}, Proposition 2.2) and \cite{Neerven2}).

\begin{theorem}
Let $ A $ be the generator of $ C_{0}- $semigroup $ \textbf{T} $ on Banach space $ X $ suth that $ L_{A} $ non-empty. Then a boundedness of the resolvent of A is given by
\begin{equation}\label{bound1}
 \sup_{Re \lambda\geq 0 }\left\|R(\lambda, A) \right\|\leq 2\inf_{Q\in L_{A}}\left\|Q \right\|,
\end{equation}

and there exists $ \delta_{0}>0 $ such that
\begin{equation}\label{bound2}
	\sup_{Re \lambda\leq -\delta_{0} }\left\|R(\lambda, A) \right\|\leq \dfrac{2\left\| Q\right\| }{1-2\delta_{0}\left\|Q \right\|}, \quad   \forall Q\in L_{A}.
\end{equation}
\end{theorem}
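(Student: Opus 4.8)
The plan is to turn the resolvent equation into the Lyapunov inequality (\ref{ineq1}). Fix $Q\in L_A$; since $L_A$ is non-empty, the Remark after Theorem \ref{1} makes $\mathbf{T}$ uniformly exponentially stable, so $\omega_0<0$ and $R(\lambda,A)$ exists on the closed right half-plane $\{\mathrm{Re}\,\lambda\ge 0\}$. For such $\lambda$ and arbitrary $y\in X$ set $x:=R(\lambda,A)y\in D(A)$, so that $Ax=\lambda x-y$. Substituting this into $\langle QAx,x\rangle+\langle Qx,Ax\rangle\le-\|x\|^2$ and using that $Q$ is symmetric and that the duality pairing is antilinear in its second slot, the cross terms collapse and I obtain
\[ 2\,\mathrm{Re}\,\lambda\,\langle Qx,x\rangle-2\,\mathrm{Re}\,\langle Qx,y\rangle\le-\|x\|^2. \]

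To get (\ref{bound1}) I use $\mathrm{Re}\,\lambda\ge 0$ together with the positivity $\langle Qx,x\rangle\ge 0$ to discard the first term, which leaves $\|x\|^2\le 2\,\mathrm{Re}\,\langle Qx,y\rangle\le 2\|Q\|\,\|x\|\,\|y\|$, i.e. $\|R(\lambda,A)y\|\le 2\|Q\|\,\|y\|$. Since this is uniform in $\lambda$ on the right half-plane, $\sup_{\mathrm{Re}\,\lambda\ge 0}\|R(\lambda,A)\|\le 2\|Q\|$, and taking the infimum over $Q\in L_A$ gives (\ref{bound1}).

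For (\ref{bound2}) I write $\mathrm{Re}\,\lambda=-\mu$ with $\mu\ge 0$. Now $2\,\mathrm{Re}\,\lambda\,\langle Qx,x\rangle=-2\mu\langle Qx,x\rangle$ has the unfavorable sign, so I retain it and estimate $\langle Qx,x\rangle\le\|Q\|\,\|x\|^2$, which turns the displayed inequality into $\|x\|^2\le 2\|Q\|\,\|x\|\,\|y\|+2\mu\|Q\|\,\|x\|^2$. As long as $1-2\mu\|Q\|>0$ this rearranges to $\|x\|\le\frac{2\|Q\|}{1-2\mu\|Q\|}\|y\|$; fixing $\delta_0\in(0,\tfrac{1}{2\|Q\|})$ and keeping $\mu\le\delta_0$, monotonicity of $\mu\mapsto\frac{2\|Q\|}{1-2\mu\|Q\|}$ then yields the claimed bound $\frac{2\|Q\|}{1-2\delta_0\|Q\|}$.

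The one genuinely delicate point — and the main obstacle — is that the estimate above presupposes $\lambda\in\rho(A)$, which for $\mathrm{Re}\,\lambda<0$ is precisely what has to be established. I would settle this by a Neumann-series continuation: the bound $\|R(\lambda_0,A)\|\le 2\|Q\|$ on the imaginary axis forces the open disc of radius $1/(2\|Q\|)$ around each $\lambda_0=i\,\mathrm{Im}\,\lambda$ to lie in $\rho(A)$, and these discs cover the strip $\{-\tfrac{1}{2\|Q\|}<\mathrm{Re}\,\lambda<0\}$; hence for every $\delta_0<\tfrac{1}{2\|Q\|}$ the resolvent really does exist on $\{\mathrm{Re}\,\lambda\ge-\delta_0\}$, where the a priori estimate applies. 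I would also point out that, since $\sigma(A)$ sits in a half-plane $\{\mathrm{Re}\,\lambda\le\omega_0<0\}$, the resolvent cannot stay bounded throughout $\{\mathrm{Re}\,\lambda\le-\delta_0\}$, so the region in (\ref{bound2}) is to be understood as the left strip $\mathrm{Re}\,\lambda\ge-\delta_0$ adjoining (\ref{bound1}).
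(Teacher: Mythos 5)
Your proof is correct and follows essentially the same route as the paper: substitute $x=R(\lambda,A)y$ into the Lyapunov inequality, drop the term $2\,\mathrm{Re}\,\lambda\,\langle Qx,x\rangle$ when $\mathrm{Re}\,\lambda\ge 0$, and absorb it via $\langle Qx,x\rangle\le\|Q\|\,\|x\|^{2}$ when $\mathrm{Re}\,\lambda<0$. Your Neumann-series justification that the strip $-\delta_{0}\le\mathrm{Re}\,\lambda<0$ lies in $\rho(A)$, and your observation that the region in (\ref{bound2}) must be read as that strip rather than the half-plane $\mathrm{Re}\,\lambda\le-\delta_{0}$, address points the paper treats more loosely (it invokes $s_{0}(A)<0$ and works on $s(A)<\mathrm{Re}\,\lambda<0$), and your reading is consistent with what the paper's proof actually establishes.
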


\begin{proof}
	For the resolvent set $ \rho(A) $, we recall from (\cite{Neerven2}, Corollary 2.2.2.) that, $ \textbf{T} $ is uniformly exponentially stable then $ \left\lbrace Re\lambda\geq 0 \right\rbrace \subset \rho (A)  $. Now let $ \lambda\in\rho(A) $ and $ Q\in L_{A} $ then we have
	\[ \left\langle Q(A-\lambda)x,x \right\rangle +\left\langle Qx,(A-\lambda)x \right\rangle \leq -\left\| x\right\|^{2}-2Re\lambda \left\langle Qx,x\right\rangle, \quad \forall x\in D(A).     \]
	
	Let $ y=(\lambda -A)x $, then the above inequality becomes
	\[ \left\langle Qy,R(\lambda,A)y\right\rangle +\left\langle QR(\lambda,A)y,y\right\rangle\leq-\left\|R(\lambda,A)y \right\|^{2}-2Re\lambda\left\langle QR(\lambda,A)y,R(\lambda,A)y \right\rangle, \quad \forall y\in X.     \]
	
	If $ Re\lambda\geq 0 $, then
	\[ -\left\langle Qy,R(\lambda,A)y\right\rangle -\left\langle QR(\lambda,A)y,y \right\rangle \leq -\left\|R(\lambda,A)y \right\|^{2}, \quad \forall y\in X,    \]
	
	consequently, fot all $ y\in X $ and all $ Q\in L_{A} $,
	\[ \left\|R(\lambda,A)y \right\|^{2}\leq \left\langle Qy, R(\lambda,A)y \right\rangle + \left\langle QR(\lambda,A)y,y \right\rangle \leq 2\left\| Q \right\|\left\| R(\lambda,A)y \right\| \left\| y\right\|,       \]
		
	thus
	\[ \sup_{Re \lambda\geq 0 }\left\| R(\lambda,A)\right\|\leq 2\inf _{Q\in L_{A}} \left\| Q\right\|.\]
	Since the resolvent is uniformly bounded in the open right half plane then $ s_{0}(A)<0 $, moreover, for $ s(A)<Re\lambda<0 $ we obtain that
	\[ \left\langle Qy,R(\lambda,A)y \right\rangle +\left\langle QR(\lambda,A)y,y \right\rangle \geq (1+2\left\| Q\right\| Re\lambda)\left\| R(\lambda,A)y\right\|^{2},    \]
	then
	\[ 2\left\|Q \right\| \left\| R(\lambda,A)y\right\| \left\| y\right\| \geq (1+2\left\| Q\right\| Re\lambda)\left\| R(\lambda,A)y\right\| ^{2},  \]
	
	and therefore
	\[ \left\| R(\lambda,A)y\right\|\leq \dfrac{2\left\| Q\right\| }{1+2\left\| Q\right\| Re\lambda}\left\| y\right\|, \quad \forall y\in X.   \]
	
	Since $ \alpha Q\in L_{A} $ for all $ \alpha \geq 1 $, we can choose $ \alpha \geq 1 $ such that $ \dfrac{-1}{2\alpha \left\|Q \right\| }\geq s(A) $. With $ \delta_{0} $ fixed positive real numbre and $ Q\in L_{A} $ such that $ -\delta_{0}>\dfrac{-1}{2\alpha \left\| Q\right\| } $, we have \[ \dfrac{2\left\| Q\right\| }{1+2\left\| Q\right\| Re\lambda}\leq \dfrac{2\left\| Q\right\| }{1-2\delta_{0}\left\| Q\right\| }, \]
		thus 
\[ \sup_{Re \lambda\leq -\delta_{0} }\left\|R(\lambda, A) \right\|\leq \dfrac{2\left\| Q\right\| }{1+2\delta_{0}\left\|Q \right\|}, \quad   \forall Q\in L_{A}, \]	
	with $ \dfrac{2\left\| Q\right\| }{1-2\delta_{0}\left\|Q \right\|} $ depend only on the value of $ \left\| Q\right\| $.
\end{proof}
\section{Left invertibility}

In following we study the relation between Lyapunov inequality and the left invertibility of $ \textbf{T} $. We begin with the definition of left invertibility.
    
\begin{definition}
The $ C_{0}- $semigroup $ \textbf{T} $ is left-invertible if there exists a function $ t\mapsto m(t) $ such that $ m(t)>0 $ and for all $ x\in X $ there holds 
\[ \left\| T(t)x\right\|\geq m(t)\left\| x\right\|, \quad  \forall t\geq 0.   \]
\end{definition}

We need the following results from \cite{Xu} (see also \cite{Zwart}):

\begin{proposition}\label{left-invert1}
Let $ \textbf{T} $ be a $ C_{0}- $semigroup on Banach space $ X $. Then the following statements are equivalent. 
\begin{enumerate}
	\item $ \textbf{T} $ is left-invertible.
	\item There exists $ t_{0}>0$  and $ m_{0}>0 $, such that $ \left\| T(t_{0})\right\| \geq m_{0}\left\| x\right\|$,   for all $x\in X$. 
\end{enumerate}
\end{proposition}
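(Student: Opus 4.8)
The plan is to prove the two implications separately, the first being immediate and the second carrying all the content. For $(1)\Rightarrow(2)$, if $\textbf{T}$ is left-invertible with lower-bound function $m(\cdot)$, then fixing any $t_{0}>0$ and setting $m_{0}:=m(t_{0})>0$ yields exactly statement (2), read with the intended inequality $\|T(t_{0})x\|\geq m_{0}\|x\|$ for all $x\in X$.

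For the substantive direction $(2)\Rightarrow(1)$, I would introduce the lower-bound modulus
\[ m(t):=\inf_{\|x\|=1}\|T(t)x\|, \qquad t\geq 0, \]
so that $\|T(t)x\|\geq m(t)\|x\|$ for all $x$, and left-invertibility is precisely the assertion that $m(t)>0$ for every $t\geq 0$. Since $\textbf{T}$ is a $C_{0}$-semigroup there are constants $M\geq 1$ and $\omega\in\mathbb{R}$ with $\|T(t)\|\leq Me^{\omega t}$, hence $K:=\sup_{0\leq s\leq t_{0}}\|T(s)\|<\infty$. The semigroup law yields the supermultiplicative estimate $m(s+t)\geq m(s)m(t)$, because $\|T(s+t)x\|=\|T(s)T(t)x\|\geq m(s)\|T(t)x\|\geq m(s)m(t)\|x\|$. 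Assumption (2) says exactly that $m(t_{0})\geq m_{0}>0$.

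The first key step is to propagate positivity across the interval $[0,t_{0}]$. For $0\leq t\leq t_{0}$ I would factor $T(t_{0})=T(t_{0}-t)T(t)$ and estimate
\[ m_{0}\|x\|\leq\|T(t_{0})x\|=\|T(t_{0}-t)T(t)x\|\leq\|T(t_{0}-t)\|\,\|T(t)x\|\leq K\,\|T(t)x\|, \]
which gives $m(t)\geq m_{0}/K>0$ on $[0,t_{0}]$. The second step extends this to all times: writing $t=nt_{0}+r$ with $n\geq 0$ an integer and $r\in[0,t_{0})$, repeated use of supermultiplicativity gives
\[ m(t)=m(nt_{0}+r)\geq m(t_{0})^{n}\,m(r)\geq m_{0}^{\,n}\cdot\frac{m_{0}}{K}>0. \]
Hence $m(t)>0$ for every $t\geq 0$, and $\textbf{T}$ is left-invertible with this modulus $m$.

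The step I expect to be the main obstacle is the backward propagation on $[0,t_{0}]$: we are handed a lower bound at the single time $t_{0}$ and must transfer it to smaller times by running the semigroup forward from $t$ to $t_{0}$. This works only because $\|T(t_{0}-t)\|$ stays bounded by the finite constant $K$, i.e. it rests on the local boundedness of a $C_{0}$-semigroup on compact time intervals, a consequence of strong continuity together with the uniform boundedness principle. One should also note that no uniform-in-$t$ lower bound is claimed: since $m_{0}$ may be smaller than $1$, the quantity $m_{0}^{\,n}m_{0}/K$ can decay to $0$ as $t\to\infty$, but the definition of left-invertibility requires only $m(t)>0$ pointwise, which is exactly what is established.
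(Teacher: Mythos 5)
Your proof is correct. Note, however, that the paper does not actually prove this proposition: it is stated without proof and attributed to the references \cite{Xu} and \cite{Zwart}, so there is no in-paper argument to compare against; what you have done is supply the missing elementary proof (and you correctly repaired the typo in statement (2), reading $\left\| T(t_{0})x\right\|\geq m_{0}\left\| x\right\|$). Your route --- introducing the lower-bound modulus $m(t)=\inf_{\|x\|=1}\|T(t)x\|$, observing its supermultiplicativity, propagating positivity backward on $[0,t_{0}]$ via the factorization $T(t_{0})=T(t_{0}-t)T(t)$ and the local bound $K=\sup_{0\leq s\leq t_{0}}\|T(s)\|<\infty$, and then writing $t=nt_{0}+r$ --- is the standard argument and is valid in an arbitrary Banach space, exactly as the proposition requires. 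What your self-contained proof buys over the bare citation is transparency about the quantitative content: since $n\leq t/t_{0}$ and one may take $m_{0}\leq 1$, your bound $m(t)\geq m_{0}^{\,n}m_{0}/K$ immediately gives $m(t)\geq (m_{0}/K)e^{-\alpha t}$ with $\alpha=-\log(m_{0})/t_{0}$, which is precisely the exponential lower bound asserted in Proposition \ref{left-invrt2}; so your single argument in fact establishes both of the quoted propositions at once. Your closing remark that only pointwise positivity of $m(t)$, not a uniform lower bound, is needed is also exactly right given the paper's definition of left-invertibility.
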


\begin{proposition}\label{left-invrt2}
	Let $ \textbf{T} $ be a $ C_{0}- $semigroup on Banach space $ X $. Then the following statements are equivalent.
	\begin{enumerate}
		\item $ \textbf{T} $ is left-invertible.
		\item There exist two constants  and $\alpha, c>0 $ such that \[ \left\| T(t)x\right\|\geq ce^{-\alpha t}\left\| x\right\| \quad \forall x\in X, \ t\geq 0.  \]
				
	\end{enumerate}
\end{proposition}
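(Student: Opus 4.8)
The plan is to dispose of the implication $(2)\Rightarrow(1)$ at once and to concentrate on $(1)\Rightarrow(2)$. For $(2)\Rightarrow(1)$ it suffices to set $m(t)=ce^{-\alpha t}$: this is a strictly positive function and the required inequality $\|T(t)x\|\geq m(t)\|x\|$ is precisely hypothesis $(2)$, so $\mathbf{T}$ is left-invertible by definition. All the substance lies in $(1)\Rightarrow(2)$, where the idea is to manufacture the exponential lower bound from a single-time estimate by exploiting the semigroup law, mirroring the way an exponential upper bound is built up from the bound on one finite interval.

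First I would invoke Proposition \ref{left-invert1}: left-invertibility yields $t_0>0$ and $m_0>0$ with $\|T(t_0)x\|\geq m_0\|x\|$ for all $x\in X$. Replacing $m_0$ by $\min\{m_0,\tfrac12\}$, I may assume $0<m_0<1$, which will be convenient when passing to the exponential. Two estimates then drive the argument. On the one hand, applying the single-time inequality to $T(t_0)^{\,n-1}x$ and iterating gives $\|T(nt_0)x\|=\|T(t_0)^n x\|\geq m_0^{\,n}\|x\|$ for every integer $n\geq 0$. On the other hand, since $\mathbf{T}$ is a $C_0$-semigroup the quantity $M_0:=\sup_{0\leq s\leq t_0}\|T(s)\|$ is finite; writing $T(t_0)x=T(t_0-r)T(r)x$ for $0\leq r\leq t_0$ and using both $\|T(t_0)x\|\geq m_0\|x\|$ and $\|T(t_0-r)\|\leq M_0$ yields the uniform lower bound $\|T(r)x\|\geq \tfrac{m_0}{M_0}\|x\|$ throughout the initial interval $[0,t_0]$.

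Finally I would glue the two estimates together. Given any $t\geq 0$, write $t=nt_0+r$ with $n=\lfloor t/t_0\rfloor$ and $0\leq r<t_0$. Then
\[
\|T(t)x\|=\|T(nt_0)\,T(r)x\|\geq m_0^{\,n}\,\|T(r)x\|\geq m_0^{\,n}\,\frac{m_0}{M_0}\,\|x\|.
\]
Because $0<m_0<1$ and $n\leq t/t_0$, one has $m_0^{\,n}\geq m_0^{\,t/t_0}=e^{-\alpha t}$ with $\alpha:=-\tfrac{\ln m_0}{t_0}>0$, and setting $c:=\tfrac{m_0}{M_0}>0$ gives $\|T(t)x\|\geq ce^{-\alpha t}\|x\|$ for all $x\in X$ and $t\geq 0$, which is $(2)$. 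The one point that needs genuine care—the main obstacle—is the passage across the fractional part $r$: a lower bound at the multiples $nt_0$ alone does not suffice, and it is the finiteness of $M_0$ together with the factorization $T(t_0)=T(t_0-r)T(r)$ that converts the single-time estimate into a lower bound uniform on all of $[0,t_0]$, which then propagates to every $t\geq 0$.
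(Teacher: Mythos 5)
Your proof is correct. Note that the paper itself gives no proof of this proposition: it is quoted as a known result from the references on left-invertible semigroups, so there is no internal argument to compare yours against. Your argument is the standard one and it is sound: the implication $(2)\Rightarrow(1)$ is indeed immediate from the definition with $m(t)=ce^{-\alpha t}$; for $(1)\Rightarrow(2)$ the iteration $\|T(nt_0)x\|\geq m_0^{\,n}\|x\|$, the finiteness of $M_0=\sup_{0\leq s\leq t_0}\|T(s)\|$ (which holds for any $C_0$-semigroup on a compact interval), and the factorization $T(t_0)=T(t_0-r)T(r)$ giving $\|T(r)x\|\geq (m_0/M_0)\|x\|$ on $[0,t_0]$ are all valid, and the normalization $m_0<1$ makes the step $m_0^{\,n}\geq m_0^{\,t/t_0}=e^{-\alpha t}$ legitimate. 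You correctly identify the only delicate point, namely bridging the fractional part $r$, and you handle it properly. One small remark: you did not strictly need Proposition \ref{left-invert1} to extract the single-time estimate, since the paper's definition of left-invertibility already supplies $\|T(1)x\|\geq m(1)\|x\|$ with $m(1)>0$; but invoking it is harmless and arguably cleaner.
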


\begin{theorem}
	Let $ A $ be the generator of $ C_{0}- $semigroup $ \textbf{T} $ on Banach space $ X $ isomorphic to a Hilbert space. If $ \textbf{T} $ is uniformly exponentially stable, then the following are equivalent.
	\begin{enumerate}
		\item $ \textbf{T} $ is left-invertible. 
		\item All of the elements in $ L_{A} $ are strongly positive.
	\end{enumerate}
\end{theorem}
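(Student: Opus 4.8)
The plan is to prove the two implications separately, reducing each to a quantitative estimate on the quadratic form $\langle Qx,x\rangle$ for a suitable $Q\in L_{A}$. Throughout I use that uniform exponential stability gives $\|T(t)\|\le Me^{-\beta t}$ for some $\beta>0$.

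For $(1)\Rightarrow(2)$ I would take an \emph{arbitrary} $Q\in L_{A}$ and reuse the trajectory argument from Theorem~\ref{1}: replacing $x$ by $T(t)x$ in \eqref{ineq1} gives $\frac{d}{dt}\langle QT(t)x,T(t)x\rangle\le-\|T(t)x\|^{2}$, and integrating on $(0,s)$ together with the positivity of $Q$ yields $\langle Qx,x\rangle\ge\int_{0}^{s}\|T(t)x\|^{2}\,dt$ for $x\in D(A)$. Letting $s\to+\infty$ gives $\langle Qx,x\rangle\ge\int_{0}^{+\infty}\|T(t)x\|^{2}\,dt$. Left-invertibility now enters through Proposition~\ref{left-invrt2}: there exist $\alpha,c>0$ with $\|T(t)x\|\ge ce^{-\alpha t}\|x\|$, so $\int_{0}^{+\infty}\|T(t)x\|^{2}\,dt\ge\frac{c^{2}}{2\alpha}\|x\|^{2}$. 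Hence $\langle Qx,x\rangle\ge\frac{c^{2}}{2\alpha}\|x\|^{2}$ on $D(A)$, and since both sides are continuous in $x$ and $D(A)$ is dense, the bound extends to all of $X$. Thus every $Q\in L_{A}$ is strongly positive.

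For the converse $(2)\Rightarrow(1)$ it suffices to exploit strong positivity for a single conveniently chosen element of $L_{A}$. Since $X$ is isomorphic to a Hilbert space, Theorem~\ref{Lin} supplies $P\in B(X,X^{*})$ with $\langle Px,x\rangle\ge\theta\|x\|^{2}$; I would set $\langle Qx,y\rangle:=\frac{1}{\theta}\int_{0}^{+\infty}\langle PT(t)x,T(t)y\rangle\,dt$ and verify, exactly as in Theorem~\ref{1}, that $Q\in L_{A}$ and $\langle Qx,x\rangle=\frac{1}{\theta}\int_{0}^{+\infty}\langle PT(t)x,T(t)x\rangle\,dt$. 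Using $\langle PT(t)x,T(t)x\rangle\le\|P\|\,\|T(t)x\|^{2}$ together with hypothesis (2), which gives $\langle Qx,x\rangle\ge\eta\|x\|^{2}$ for some $\eta>0$, I obtain $\int_{0}^{+\infty}\|T(t)x\|^{2}\,dt\ge c\|x\|^{2}$ with $c=\eta\theta/\|P\|>0$.

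The decisive step is converting this time-averaged lower bound into a pointwise-in-time lower bound that Proposition~\ref{left-invert1} can use. I would split $\int_{0}^{+\infty}=\int_{0}^{s}+\int_{s}^{+\infty}$ and estimate $\int_{0}^{s}\|T(t)x\|^{2}\,dt\le sM^{2}\|x\|^{2}$ and, by the semigroup law applied to $T(t)x=T(t-s)T(s)x$, $\int_{s}^{+\infty}\|T(t)x\|^{2}\,dt\le\frac{M^{2}}{2\beta}\|T(s)x\|^{2}$. Choosing $s=c/(2M^{2})$ makes the first term at most $\frac{c}{2}\|x\|^{2}$, so that $\frac{c}{2}\|x\|^{2}\le\frac{M^{2}}{2\beta}\|T(s)x\|^{2}$, i.e.\ $\|T(s)x\|\ge m_{0}\|x\|$ with $m_{0}=\sqrt{c\beta}/M$. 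By Proposition~\ref{left-invert1} this single uniform lower bound at the fixed time $s$ already yields left-invertibility. I expect this extraction of a uniform pointwise bound from the averaged estimate to be the main obstacle, since it is where the exponential upper bound and the semigroup property must be combined correctly; the remaining computations merely reprise those of Theorem~\ref{1}.
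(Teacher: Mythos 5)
Your proof of $(1)\Rightarrow(2)$ is the same as the paper's: integrate $\frac{d}{dt}\langle QT(t)x,T(t)x\rangle\le-\|T(t)x\|^{2}$ to get $\langle Qx,x\rangle\ge\int_{0}^{+\infty}\|T(t)x\|^{2}\,dt$, then invoke Proposition~\ref{left-invrt2}. For $(2)\Rightarrow(1)$ you also start identically (the operator $Q_{0}$ built from $P$ lies in $L_{A}$, and its strong positivity forces $\int_{0}^{+\infty}\|T(t)x\|^{2}\,dt\ge c\|x\|^{2}$), but the decisive step is genuinely different. The paper argues that since $\int_{0}^{+\infty}\|T(t)x\|^{2}\,dt$ dominates $\int_{0}^{+\infty}e^{-\frac{\|P\|}{\theta}t}\|x\|^{2}\,dt$, the integrands must satisfy $\|T(t_{0})x\|^{2}\ge e^{-\frac{\|P\|}{\theta}t_{0}}\|x\|^{2}$ on some set of positive measure; but that set, hence $t_{0}$, depends on $x$, whereas Proposition~\ref{left-invert1} requires a single $t_{0}$ uniform in $x$ --- so the paper's conclusion "for all $x\in X$" does not actually follow from its argument. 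Your route --- splitting $\int_{0}^{+\infty}=\int_{0}^{s}+\int_{s}^{+\infty}$, bounding the tail by $\frac{M^{2}}{2\beta}\|T(s)x\|^{2}$ via the semigroup law, and choosing $s=c/(2M^{2})$ independently of $x$ --- produces a uniform lower bound $\|T(s)x\|\ge\frac{\sqrt{c\beta}}{M}\|x\|$ at a fixed time, which is exactly what Proposition~\ref{left-invert1} needs. So your argument is not only correct but repairs the gap in the published proof, at the modest cost of explicitly using the exponential upper bound $\|T(t)\|\le Me^{-\beta t}$ a second time.
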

\begin{proof}
	$ 1.\Rightarrow 2.$  Let $ Q\in L_{A}$, so that 
	$\left\langle QAx,x\right\rangle + \left\langle Qx,Ax\right\rangle \leq -\left\| x\right\|^{2}$ for all $ x\in D(A) $.  
	 This implies that
\[ \dfrac{d}{dt}\left\langle QT(t)x,T(t)x\right\rangle\leq -\left\| T(t)x\right\| ^{2}, \quad \forall x\in D(A).  \]	
	
	As in the proof of theorem \ref{1} use that $ A $ is a densely defined operator on $ X $, we obtain that
\[ \left\langle Qx,x\right\rangle\geq \int_{0}^{+\infty}\left\| T(t)x\right\|^{2} dt \quad \forall x\in X. \]	
	
	Thus, by the left invertibility of $ \textbf{T} $, and Proposition \ref{left-invrt2}, there exists $ \alpha, c>0 $ such that 
\[ \left\langle Qx,x\right\rangle\geq c^{2}\int_{0}^{+\infty} e^{-2\alpha s}\left\| x\right\| ^{2} ds =\dfrac{c^{2}}{2\alpha}\left\| x\right\|^{2}, \ \forall x\in X \]	

$ 2. \Rightarrow 1. $ 	
We show in the proof of theorem \ref{1} that the operator $ Q_{0}\in B(X,X^{*})$ defined by
\[ \left\langle Q_{0}x,y \right\rangle := \dfrac{1}{\theta}\int_{0}^{+\infty}\left\langle PT(t)x,T(t)y\right\rangle dt, \quad x,y\in X,  \]

is a solution of the Lyapunov inequality (\ref{ineq1}) i.e. $ Q_{0}\in L_{A} $. If $ Q $ is strongly positive then for all $ x\in X $,
\[ \dfrac{1}{\theta}\int_{0}^{+\infty}\left\langle PT(t)x,T(t)x\right\rangle ds \geq \theta \left\| x\right\| ^{2}, \]
for some $ \theta >0 $.

Since $ \dfrac{1}{\theta}\int_{0}^{+\infty}\left\langle PT(t)x,T(t)x\right\rangle ds\leq \left\| P\right\| \int_{0}^{+\infty}\left\| T(t)x\right\|^{2} dt $ and 
$$\dfrac{\theta}{\left\| P\right\| }\left\| x\right\|^{2}= \int_{0}^{+\infty}e^{-\frac{|| P||}{\theta}s }\left\| x\right\| ^{2} ds,   $$
then for all $ x\in X $,
\[ \int_{0}^{+\infty}\left\| T(s)x\right\| ^{2} ds\geq \int_{0}^{+\infty}e^{-\frac{|| P||}{\theta}s }\left\| x\right\| ^{2} ds. \]

Clealy, for two positive real functions $f(t):=\left\|T(t)x \right\|^{2}   $ and $ g(t) :=e^{-\frac{|| P||}{\theta}s }\left\| x\right\| ^{2} $, such that $ \int_{0}^{+\infty}f(t) dt \geq \int_{0}^{+\infty} g(t) dt $, there exists a set $ \Omega_{0}\subset \mathbb{R}_{+} $ with non-zero measure such that $ f(f)\geq g(t) $ for all $ t\in \Omega_{0} $, then there exists $ t_{0}\in \Omega_{0} $,  
\[ \left\| T(t_{0})x\right\| \geq e^{-\frac{|| P||}{2\theta}t_{0} }\left\| x\right\|, \]  for all $ x\in X $. By the proposition \ref{left-invert1}, $ \textbf{T} $ is left-invertible.
	
\end{proof}

\end{document}